\newtheorem{thm}{Theorem}[section]
\newtheorem{lemma}[thm]{Lemma}
\newtheorem{defi}[thm]{Definition}
\title{Characterization of curves in $C^{(2)}$}
\author{Meritxell S\'aez}
\date{}
\begin{document}
\bibstyle{plain}

\maketitle

\begin{abstract}
In this paper we characterize the irreducible curves lying in $C^{(2)}$. We prove that a curve $B$ has a degree one morphism to $C^{(2)}$ with image a curve of degree $d$ with irreducible preimage in $C\times C$ if and only if there exists an irreducible smooth curve $D$ and morphisms from $D$ to $C$ and $B$ of degrees $d$ and $2$ respectively forming a diagram which does not reduce.  

{\parskip7pt
\noindent \textbf{Keywords}: Symmetric product, curve, irregular surface, curves in surfaces.}
\end{abstract}

\let\thefootnote\relax\footnote{The author has been partially supported by the Proyecto de Investigaci\'on MTM2012-38122-C03-02.}

\section{Introduction}

Given a curve $B\subset C^{(2)}$, we define the \textbf{degree} of $B$ as the integer $d$ such that $C_P\cdot B=d$ where $C_P$ denotes the coordinate curve in $C^{(2)}$ with base point $P$. The curves of degree one in $C^{(2)}$ are completely characterized by the two results in \cite[Pg. 310, D-10]{ACGH} and \cite{Ciro}, where it is proven that a curve $B$ of degree one in $C^{(2)}$ different from a coordinate curve is smooth and it exists if and only if there exists a degree two morphism $f:C\rightarrow B$. Moreover, $B=\{f^{-1}(q)\ |\ q\in B\}\subset C^{(2)}$. 

In \cite{Chan} a different proof of this result is given. From this proof we remark that considering the curve $B\subset C^{(2)}$ as before, then, the preimage of $B$ by $\pi_C: C\times C\rightarrow C^{(2)}$ is isomorphic to $C$ through the projection onto the first factor.





Let $\tilde{B}$ be an irreducible curve in $C^{(2)}$ different from a coordinate divisor. Let $B$ be its normalization and assume that there is no degree two morphism from $C$ to $B$. Then, since $C_P$ is ample in $C^{(2)}$, from the characterization of degree one curves we deduce that $\tilde{B}\cdot C_P\geq 2$. In this paper we present a characterization of curves with any degree. First of all we need the following definition: 

\begin{defi}\label{defred}
We say that a diagram of morphisms of curves 
\[
\xymatrix{
D \ar[d]_{(d:1)}\ar[r]^{(e:1)}& B\\
C &
}
\]
\textbf{reduces} if there exist curves $F$ and $H$ such that there exists a diagram
\[
\xymatrix{
D \ar@/_{10pt}/[dd]_{(d:1)}\ar[r]^{(e:1)} \ar@{.>}[d]^{(k:1)}& B \ar@{.>}[d]^{(k:1)}\\
F \ar@{.>}[d] \ar@{.>}[r]_{(e:1)} & H \\
C & 
}
\] 
with $k>1$, the upper square being a commutative diagram and the left vertical arrows giving a factorization of the original degree $d$ morphism.

When $k=d$ we will say that the diagram \textbf{completes}, and we will obtain a commutative diagram
\[
\xymatrix{
D \ar[d]_{(d:1)}\ar[r]^{(e:1)}& B \ar@{.>}[d]^{(d:1)}\\
C \ar@{.>}[r]_{(e:1)}& H.
}
\]
\end{defi}

Notice that when $d$ is a prime number both definitions are equivalent.

In this paper we prove 

\begin{thm}\label{charcurves}
Let $B$ be an irreducible smooth curve such that there are no non-trivial morphisms $B\rightarrow C$. A morphism of degree one from the curve $B$ to the surface $C^{(2)}$ exists, with image $\tilde{B}$ of degree $d$ if, and only if, there exists a smooth irreducible curve $D$ and a diagram 
\[
\xymatrix{
D \ar[d]_{(d:1)}\ar[r]^{(2:1)}& B\\
C &
}
\]
which does not reduce.
\end{thm}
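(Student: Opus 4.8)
The plan is to prove both implications by passing through the double cover $\pi_C\colon C\times C\to C^{(2)}$ and its covering involution $\iota$ (the swap of the two factors), using the elementary observation that for any curve $\tilde B\subset C^{(2)}$ the intersection number $C_P\cdot\tilde B$ counts the divisors of $\tilde B$ passing through $P$, and hence equals the degree of the first projection $p_1\colon C\times C\to C$ restricted to $\pi_C^{-1}(\tilde B)$. For the forward implication I would start from a degree-one morphism $\phi\colon B\to C^{(2)}$ with image $\tilde B$ of degree $d$ and take $D$ to be the normalization of $\pi_C^{-1}(\tilde B)$, equivalently of $B\times_{C^{(2)}}(C\times C)$. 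The covering involution induces an involution $\sigma$ on $D$ with $D/\sigma\cong B$, so $\pi_C$ yields a degree-two morphism $g\colon D\to B$, while $p_1$ restricts to a morphism $f\colon D\to C$ of degree $C_P\cdot\tilde B=d$ by the observation above. This produces the diagram $D\xrightarrow{(d:1)}C$, $D\xrightarrow{(2:1)}B$. That $\pi_C^{-1}(\tilde B)$ is irreducible, so that $D$ is irreducible, is where I would invoke the hypothesis that there is no non-trivial morphism $B\to C$: a splitting of the double cover would produce a section and hence such a morphism.

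To see that the diagram does not reduce I would argue by contraposition. A reduction with factor $k>1$ supplies curves $F,H$, a morphism $h\colon D\to F$ of degree $k$, an involution $\tau$ on $F$ over $H$ compatible with $\sigma$ through the commutativity of the upper square (so that $h\circ\sigma=\tau\circ h$), and a factorization $D\xrightarrow{h}F\to C$ of $f$ with $\bar f\colon F\to C$ of degree $d/k$. Composing the $\tau$-invariant map $F\to C^{(2)}$, $w\mapsto\bar f(w)+\bar f(\tau w)$, with $B\to H$ then exhibits $\phi$ as a composite $B\to H\to C^{(2)}$ in which $B\to H$ has degree $k>1$; this forces $\deg(B\to\tilde B)\ge k>1$, contradicting that $\phi$ is degree one.

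For the converse, given $f\colon D\to C$ of degree $d$ and $g\colon D\to B$ of degree two, I would use that a separable degree-two map is Galois to obtain the involution $\sigma$ on $D$ with $B=D/\sigma$, and define $\psi\colon B\to C^{(2)}$ by $[x]\mapsto f(x)+f(\sigma x)$; this is well defined, $\sigma$-invariant, and extends to a morphism because $B$ is a smooth curve and $C^{(2)}$ is projective. The identity $(C_P\cdot\tilde B)\cdot\deg(B\to\tilde B)=\deg f=d$, obtained by counting the fiber $f^{-1}(P)$ over a point $P$, shows that $\psi$ has image of degree $d$ as soon as it is birational onto its image. If instead $\deg(B\to\tilde B)=k>1$, I would extract from the Stein-type factorization $B\to H\to C^{(2)}$, with $H$ the normalization of the image of an intermediate quotient and $F$ the corresponding double cover furnished by the forward construction applied to $H$, precisely the data $(F,H,k)$ of Definition \ref{defred}; thus the non-reducing hypothesis forces $\psi$ to be degree one. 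One must also check that $\tilde B$ is not a coordinate curve, which again follows from the absence of a non-trivial morphism $B\to C$, since $C_P\cong C$.

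The main obstacle I expect lies in the precise bookkeeping behind the equivalence ``$\psi$ is not degree one $\iff$ the diagram reduces'': one must produce the intermediate curve $F$ together with an involution compatible with both $\sigma$ and the involution of $F$ over $H$, and then verify that the two vertical arrows genuinely factor the degree-$d$ morphism $D\to C$ as required by Definition \ref{defred}, rather than merely yielding an abstract factorization of $B\to\tilde B$. Controlling how the ramification of $g\colon D\to B$ interacts with the fibers of $f\colon D\to C$ is the delicate point, and it is here that the irreducibility of $D$ and the hypothesis on $B$ are essential.
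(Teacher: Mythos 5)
Your overall route coincides with the paper's: Theorem \ref{charcurves} is proved there as Theorem \ref{const} plus Theorem \ref{ext}, and both of your directions pass, as the paper does, through the double cover $\pi_C\colon C\times C\to C^{(2)}$, the preimage curve of $\tilde B$, and the intersection count identifying $C_P\cdot\tilde B$ with the degree of the first projection restricted to $\pi_C^{-1}(\tilde B)$. Your direction ``degree-one morphism $\Rightarrow$ non-reducing diagram'' is essentially the paper's Theorem \ref{ext} and is sound: your descended map $H\to C^{(2)}$, $w\mapsto \bar f(w)+\bar f(\tau w)$, is precisely the paper's $l^{(2)}|_H$ under the embedding $H\hookrightarrow F^{(2)}$ given by the fibers of $s$, and both arguments conclude that $B\to H$ has degree one because the composite $B\to H\to C^{(2)}$ is the given degree-one map. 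One unproven claim there: $h\circ\sigma=\tau\circ h$ does not follow formally from commutativity of the upper square, since a priori $h\circ\sigma=h$ could hold identically on $D$; but in that case $f=\bar f\circ h$ would descend to $B=D/\sigma$ and yield a non-trivial morphism $B\to C$, which the hypothesis forbids, so a one-line exclusion repairs this.

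The genuine gap is in the converse direction, and it sits exactly where you predict: you never produce the data $(F,H,k)$ of Definition \ref{defred}, namely a degree-$k$ map $D\to F$ whose composition with some $F\to C$ equals the given degree-$d$ map, as opposed to a mere abstract factorization $B\to H\to \tilde B$ of $\psi$. The paper's Theorem \ref{const} fills this with a concrete device absent from your sketch: realize $D$ inside $D\times D$ as $\pi_D^{-1}(B)$, the ordered pairs $(x,\sigma x)$, and map it by $g\times g$ into $C\times C$. Since $\pi_C\circ(g\times g)=g^{(2)}\circ\pi_D$, the image lands in $F:=\pi_C^{-1}(\tilde B)$; the commutative square over $B\to \tilde B$, with $F\to\tilde B$ of degree two, forces $\deg(D\to F)=k$; and, the key point, the first projection of $C\times C$ restricted to $F$, precomposed with $D\to F$, is literally the original degree-$d$ morphism $D\to C$, which is exactly the factorization Definition \ref{defred} demands. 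Taking $H=\tilde B$ itself (no Stein-type factorization is needed), any $k>1$ exhibits a reduction of the diagram, a contradiction, so $k=1$. Note also that the locus you must exclude is the diagonal, not the coordinate curve: $\pi_C$ ramifies along $\Delta_{C\times C}$, so $F\to\tilde B$ would fail to be $(2:1)$ if $\tilde B=\Delta_C$; the paper's Lemma \ref{digrdiag} shows this happens precisely when the degree-$d$ map factors through $B$, which your hypothesis on $B$ rules out. As written, your proposal correctly identifies but does not close this step, so the proof of the ``diagram $\Rightarrow$ curve'' implication is incomplete.
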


If we consider the case $d=1$ we recover the results for degree one.

\vspace{5pt}
We prove the theorem in two steps, giving a separated proof for each implication(see Theorem \ref{const} and Theorem \ref{ext}). First, given a diagram 
\[
\xymatrix{
D \ar[d]_{(d:1)}^{g}\ar[r]^{(2:1)}_{f}& B\\
C &
}
\]
which does not reduce we find a curve in $C^{(2)}$ defined by it as the image by $g^{(2)}$ of the immersion of $B$ in $D^{(2)}$ given by $f$. We prove that $g^{(2)}|_{B}$ has degree one, and hence the curve in $C^{(2)}$ has normalization $B$ and the normalization map is precisely $g^{(2)}|_{B}$. Second, given a curve lying in $C^{(2)}$ we find a diagram defined by the curve, its preimage by $\pi_C$ and the projection on one factor of $C\times C$. We compute the degrees of the different maps and prove that this diagram does not reduce. 

In a following paper we are going to use this result to study and classify curves of degree two and some of degree three.

\textbf{Notation:} We work over the complex numbers. By curve we mean a complex projective reduced algebraic curve. Let $C$ be a smooth curve of genus $g\geq 2$, we put $C^{(2)}$ for its $2$nd symmetric product. We denote by $\pi_C:C \times C \rightarrow C^{(2)}$ the natural map, and $C_P \subset C^{(2)}$ a coordinate curve with base point $P\in C$. We put $\Delta_C$ for the main diagonal in $C^{(2)}$, and $\Delta_{C \times C}$ denotes the diagonal of the Cartesian product $C\times C$.

\section{Characterization}\label{Charsec}

We begin with a lemma that will simplify the rest of the exposition.

\begin{lemma}\label{digrdiag}
We consider a diagram of morphisms of smooth irreducible curves 
\[
\xymatrix{
D \ar[d]_{(d:1)}^{g}\ar[r]_{(2:1)}^{f}& B\\
C &.
}
\]
The image of $B\subset D^{(2)}$ (with the immersion given by the fibers of $f$) by the morphism $g^{(2)}$ is the diagonal $\Delta_C\subset C^{(2)}$ if and only if the morphism $g$ factorizes through the curve $B$ by $f$.
\end{lemma}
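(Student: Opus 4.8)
The plan is to exploit the fact that a degree-two morphism of smooth curves is automatically Galois. First I would produce the associated involution: since $f\colon D\to B$ has degree $2$, the field extension is separable of degree $2$, hence Galois, so there is an involution $\sigma\colon D\to D$ with $B\cong D/\langle\sigma\rangle$ and $f$ the quotient map. Consequently every fibre of $f$ is an orbit $\{x,\sigma(x)\}$, and the immersion $\iota\colon B\hookrightarrow D^{(2)}$ given by the fibres of $f$ sends a point $b=f(x)$ to the effective divisor $x+\sigma(x)$, with the ramification points of $f$ mapping to the diagonal of $D^{(2)}$. Composing with $g^{(2)}$ yields the morphism
\[
\Phi:=g^{(2)}\circ\iota\colon B\longrightarrow C^{(2)},\qquad \Phi\big(f(x)\big)=g(x)+g(\sigma(x)).
\]

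Next I would reduce the diagonal condition to an invariance condition on $g$. A point $p+q\in C^{(2)}$ lies on $\Delta_C$ precisely when $p=q$; hence $\Phi(B)\subseteq\Delta_C$ if and only if $g(x)=g(\sigma(x))$ for every $x\in D$, that is, $g\circ\sigma=g$. This equivalence is the crux of the argument. For the forward implication, once the image equals $\Delta_C$ it is in particular contained in $\Delta_C$, so $g$ is $\sigma$-invariant. A $\sigma$-invariant morphism to $C$ is constant on the fibres of the quotient map $f$, and since $B\cong D/\langle\sigma\rangle$ is the normal quotient, $g$ descends to a morphism $h\colon B\to C$ with $g=h\circ f$, which is exactly the asserted factorization.

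For the converse, suppose $g=h\circ f$. Then for each $x$ we have $g(x)=h(f(x))=h(f(\sigma(x)))=g(\sigma(x))$, so $\Phi(f(x))=2\,h(f(x))\in\Delta_C$ and the image is contained in $\Delta_C$. To upgrade ``contained in'' to ``equal to'', I would note that $g$ has positive degree $d$, hence is non-constant, so $h$ is non-constant, $\Phi$ is non-constant, and its image is an irreducible curve inside the irreducible curve $\Delta_C\cong C$; this forces $\Phi(B)=\Delta_C$.

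The step I expect to require the most care is the descent of the $\sigma$-invariant morphism $g$ to a genuine morphism $h$ on the quotient $B$, rather than a mere set-theoretic map, together with the correct bookkeeping at the ramification points of $f$, where the fibre is the non-reduced divisor $2x$ and the invariance condition holds automatically. Using the normality of $B$ and the universal property of the quotient $D/\langle\sigma\rangle$ should handle this cleanly, so no delicate local computation is needed.
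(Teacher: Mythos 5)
Your proof is correct and follows essentially the same route as the paper: the paper's argument likewise introduces the sheet-exchange involution $i$ of $f$, writes the image of $B\subset D^{(2)}$ under $g^{(2)}$ as $\{g(x)+g(i(x))\}$, and observes this lies in $\Delta_C$ exactly when $g\circ i=g$, i.e.\ when $g$ factors through $f$. Your extra care over the descent of the $\sigma$-invariant map and over upgrading ``contained in $\Delta_C$'' to ``equals $\Delta_C$'' merely fills in details the paper leaves implicit.
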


\begin{proof}
Let $i$ be the involution on $D$ that defines $f$, that is, the change of sheet. Since $B=\{x+y\ |\ f(x)=f(y)\}=\{x+i(x)\}\subset D^{(2)}$, then $Im(g^{(2)}|_{B})=\{g(x)+g(i(x))\}$. It is contained in the diagonal $\Delta_C$ if and only if $g(x)=g(i(x))$ for all $x\in D$, that is, if and only if $g$ factorizes through $B$ by $f$. 
\end{proof} 

In the following theorem, given a diagram that does not reduce we deduce the existence of a curve in $C^{(2)}$ naturally attached to it. 

\begin{thm}\label{const}
Assume that there exists a diagram of morphisms of smooth irreducible curves 
\[
\xymatrix{
D \ar[d]_{(d:1)}^{g}\ar[r]_{(2:1)}^{f}& B\\
C &
}
\]
which does not reduce and such that the morphism $g$ does not factorize through $B$ by $f$. Then, $g^{(2)}$ gives a degree one map $B\rightarrow C^{(2)}$ with reduced image a curve $\tilde{B}$ of degree precisely $d$. 

\end{thm}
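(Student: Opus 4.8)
The plan is to read off all the geometry from the single formula describing the restricted map. Writing $i$ for the sheet-exchange involution of $f$, the curve $B\subset D^{(2)}$ is $\{x+i(x)\}$, so the map in question is $\phi:=g^{(2)}|_B$ with $\phi(x+i(x))=g(x)+g(i(x))$. Since $g$ does not factorize through $B$ by $f$, Lemma \ref{digrdiag} guarantees that the image is not the diagonal, so $\phi$ is non-constant and $\tilde B:=\phi(B)$ is an honest curve; put $k:=\deg(\phi\colon B\to\tilde B)$. First I would pin down the relation $d=k\cdot\deg\tilde B$ by intersecting with a coordinate curve: a point $x+i(x)$ lies in $\phi^{-1}(C_P)$ exactly when $P\in\{g(x),g(i(x))\}$, and the two conditions $g(x)=P$ and $g(i(x))=P$ cut out the same points of $B$. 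For generic $P$ the fibre $g^{-1}(P)$ consists of $d$ reduced points, none of which satisfies $g(x)=g(i(x))$ --- here the non-factorization hypothesis enters, since by Lemma \ref{digrdiag} the locus $\{g(x)=g(i(x))\}$ is a proper, hence finite, subset of $D$. Thus $x\mapsto x+i(x)$ is injective on $g^{-1}(P)$, the intersection is transverse, and $\#\phi^{-1}(C_P)=d$, which gives $C_P\cdot\tilde B=\deg\tilde B$ and $d=k\cdot\deg\tilde B$.

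The whole theorem then reduces to proving $k=1$, and this is where the hypothesis that the diagram does not reduce is used, by contraposition. Suppose $k>1$. Let $H$ be the normalization of $\tilde B$, so that $\phi$ factors as $B\xrightarrow{\psi}H\to\tilde B$ with $\deg\psi=k$. I would then define $F$ to be the image of the morphism $(g,\psi\circ f)\colon D\to C\times H$, with $F\to C$ and $F\to H$ the two projections. By construction the composite $D\to F\to C$ equals $g$, and the square built from $D\to F\to H$ and $D\xrightarrow{f}B\xrightarrow{\psi}H$ commutes; so if I can show that $D\to F$ has degree exactly $k$, I will have produced precisely the reduction diagram of Definition \ref{defred} with parameter $k>1$ (with $F\to H$ of degree $2k/k=2$ and $F\to C$ of degree $d/k$), contradicting the hypothesis. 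Hence $k=1$, and then $d=k\cdot\deg\tilde B=\deg\tilde B$, while $B$ is the normalization of $\tilde B$ and $\phi$ is the normalization map, as claimed.

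The main obstacle is exactly the degree computation $\deg(D\to F)=k$, which is the combinatorial heart of the argument. Two points of $D$ coincide in $F$ iff they have the same image under both $g$ and $\psi\circ f$. Fix a generic $x_0$ and put $P_0=g(x_0)$, $Q_0=g(i(x_0))$; genericity together with non-factorization gives $P_0\neq Q_0$. The $k$ points of the generic fibre of $\psi$ over $\psi f(x_0)$ pull back under $f$ to $k$ pairs $\{x_j,i(x_j)\}$, and for each of them $\{g(x_j),g(i(x_j))\}=\{P_0,Q_0\}$ as divisors on $C$, since $\psi f(x_j)=\psi f(x_0)$ forces $\phi$ of the corresponding point of $B$ to equal $\phi(x_0+i(x_0))$. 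Because $P_0\neq Q_0$, exactly one member of each pair has $g$-value $P_0$, so among the $2k$ candidate points exactly $k$ share the value $(P_0,\psi f(x_0))$, and the generic fibre of $D\to F$ has $k$ elements. I expect the remaining care to be routine: checking that the various genericity conditions (reducedness of fibres, $P_0\neq Q_0$, transversality with $C_P$) hold simultaneously off a proper closed subset, and that $F$ and $H$ are genuine irreducible curves, so that the constructed diagram is a legitimate reduction.
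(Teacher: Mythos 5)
Your proof is correct, and although it shares the paper's overall skeleton --- set $k=\deg\bigl(g^{(2)}|_B:B\to\tilde B\bigr)$, establish $d=k\cdot\deg\tilde B$ by intersecting with coordinate curves, then rule out $k>1$ by producing a reduction of the diagram --- both crucial steps are executed by genuinely different means. For the degree relation the paper applies the projection formula to get ${g^{(2)}}_*(B)\cdot C_P=d$ from $B\cdot D_P=1$, where you instead count $\phi^{-1}(C_P)$ pointwise, using finiteness of the locus $\{g(x)=g(i(x))\}$ (which is where non-factorization enters, just as in Lemma \ref{digrdiag}) to see that $x\mapsto x+i(x)$ is injective on a generic fibre $g^{-1}(P)$. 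More substantially, the reduction witnesses differ: the paper takes $F=\pi_C^{-1}(\tilde B)\subset C\times C$, so that $F\to\tilde B$ has degree $2$ and $\deg(D\to F)=k$ is read off the commutative diagram (\ref{diag}), while you take $H$ to be the normalization of $\tilde B$, lift $\phi$ to $\psi:B\to H$, define $F$ as the image of $(g,\psi\circ f):D\to C\times H$, and prove $\deg(D\to F)=k$ by the $P_0\neq Q_0$ fibre count; that count is sound, since the $2k$ points of a generic fibre of $\psi\circ f$ split into $k$ pairs each carrying $g$-values $\{P_0,Q_0\}$, of which exactly one member maps to $P_0$. Your route has a concrete advantage: it never needs $\pi_C^{-1}(\tilde B)$ to be irreducible. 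The paper tacitly assumes this --- if the preimage split into two components, each mapping birationally to $\tilde B$, the bookkeeping giving $D\to F$ degree $k$ in (\ref{diag}) would fail as stated, and that case must be excluded separately (for instance by observing it would force $g$ to factor through $B$ by $f$) --- whereas your $F$, being the image of the irreducible curve $D$, is automatically irreducible and your count is unconditional. What the paper's choice buys in exchange is the extra geometric output recorded in diagram (\ref{diagcompl}): once $k=1$, the same $F$ shows that $\pi_C^{-1}(\tilde B)$ has normalization $D$, the picture echoed in Theorem \ref{ext}; your construction does not yield this, though the theorem's statement does not require it. Two cosmetic remarks: non-constancy of $\phi$ follows simply from $g$ being non-constant (a constant map also misses the diagonal, so your inference there is slightly off, though the conclusion is trivially true); and since your $F$ may be singular, either normalize it or note that Definition \ref{defred} only asks for curves, not smooth ones --- neither point affects the argument.
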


\begin{proof}
Consider a diagram as above and look at the induced morphism $D^{(2)}\stackrel{g^{(2)}}{\rightarrow}C^{(2)}$. As we have seen in the Introduction, we have an immersion $B\subset D^{(2)}$ as the set of pairs of points in $D$ with the same image by $f$. Then, we consider $D$ inside $D \times D$ as $\pi^{-1}_D(B)\cong D$, that is, ordered pairs of points with the same image by $f$.

Let $\tilde{B}=g^{(2)}(B)_{red}$, the reduced image curve in $C^{(2)}$, and consider the map $B\stackrel{(k:1)}{\rightarrow}\tilde{B}$ induced by $g^{(2)}$. We want to see that $k=1$.

Notice that by Lemma \ref{digrdiag} we can assume that $\tilde{B}$ is not $\Delta_C$. We know that $B\cdot D_P=1$, hence, 
\[
1={g^{(2)}}_*(B\cdot D_P)={g^{(2)}}_*(B)\cdot(\frac{1}{d}C_P) \Rightarrow {g^{(2)}}_*(B)\cdot C_P=d.
\]
In addition, since the map $B\stackrel{(k:1)}{\rightarrow} \tilde{B}$ is $g^{(2)}|_B$, we obtain that $d=(k\tilde{B})\cdot C_p$, and thus $\tilde{B}\cdot C_p=\frac{d}{k}$, that is, $k$ divides $d$. 

Assume by contradiction that $k>1$. 

Let $F$ be the preimage of $\tilde{B}$ by the morphism $\pi_C:C\times C \rightarrow C^{(2)}$. Then $F\rightarrow \tilde{B}$ has degree two and thus we obtain a diagram
\begin{equation}\label{diag}
\xymatrix@M+2pt{
D \times D \ar[r]^{\pi_D} \ar@/_20pt/[ddd]_{g\times g} & D^{(2)} \ar@/^20pt/[ddd]^{g^{(2)}}\\
D \ar@{_{(}->}[u] \ar[d] \ar[r]^{f}_{(2:1)}& B \ar[d]|(0.37){(k:1)} \ar@{_{(}->}[u] \\
F \ar@{_{(}->}[d] \ar[r]^{(2:1)}& \tilde{B}\ar@{_{(}->}[d]\\
C\times C \ar[r]^{\pi_C}& C^{(2)}.
}
\end{equation}
Observe that the exterior arrows form a commutative diagram, and hence, also the interior arrows give a commutative diagram. Thus, the morphism $D \rightarrow F$ has degree $k$. Now, the restriction to $D$ of $g\times g$ followed by the projection onto one factor of $C\times C$ is precisely $g:D\rightarrow C$ by construction. That is, we obtain the diagram
\[
\xymatrix{
D \ar@/_{10pt}/[dd]_{g} \ar[r]^{(2:1)} \ar[d]^{(k:1)}& B \ar[d]^{(k:1)}\\
F \ar[d] \ar[r]_{(2:1)} & \tilde{B} \\
C & .
}
\] 
Hence, the original diagram reduces, contradicting our hypothesis. 

Consequently, $k=1$ and thus we deduce that the curve $\tilde{B}$ has normalization $B$. 

Moreover, looking at diagram (\ref{diag}) we deduce that $D \stackrel{(1:1)}{\rightarrow} F$, that is, the preimage of $\tilde{B}$ by $\pi_C$ has normalization $D$,  and we will denote it by $\tilde{D}$. So we have:
\begin{equation}\label{diagcompl}
\xymatrix@M+2pt{
D \times D \ar[r]^{\pi_D} \ar@/_20pt/[ddd]_{g\times g} & D^{(2)} \ar@/^20pt/[ddd]^{g^{(2)}}\\
D \ar@{-->}@/_20pt/[ddd]_{g} \ar@{_{(}->}[u] \ar[d]|(0.37){\approx} \ar@{-->}[r]^{f}& B \ar[d]|(0.37){\approx} \ar@{_{(}->}[u] \\
\tilde{D} \ar@{_{(}->}[d] \ar[r]& \tilde{B}\ar@{_{(}->}[d]\\
C\times C \ar[r]^{\pi_C} \ar[d]^{pr}& C^{(2)}\\
C&
}
\end{equation}
where the dashed arrows show the original diagram.
\end{proof}

Conversely, we have also a theorem in the opposite direction, from the existence of curves in $C^{(2)}$ we deduce the existence of diagrams which do not reduce.

\begin{thm}\label{ext}
Given an irreducible curve $\tilde{B}$ lying in $C^{(2)}$ with degree $d$, let $B$ be its normalization, and assume that there are no non trivial morphisms $B\rightarrow C$. Then, there exists a smooth irreducible curve $D$ and a diagram 
\[
\xymatrix{
D \ar[d]_{(d:1)}\ar[r]^{(2:1)}& B\\
C &
}
\]
which does not reduce.
\end{thm}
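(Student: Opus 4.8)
The plan is to build the curve $D$ and the diagram directly from $\tilde B$, and then verify non-reduction by a contradiction that feeds off the hypothesis that $B$ has no non-constant map to $C$. First I would set $F:=\pi_C^{-1}(\tilde B)\subset C\times C$, a curve invariant under the sheet-swap involution $\sigma$. Since $B$ admits no non-trivial morphism to $C$, the curve $\tilde B$ is neither a coordinate curve nor the diagonal $\Delta_C$ (in either case the normalization of $\tilde B$ would be isomorphic to $C$, producing such a morphism), so $\sigma$ acts non-trivially on $F$ and $F\to\tilde B$ has degree $2$. The crucial preliminary point is that $F$ is irreducible: otherwise $F=F_1\cup\sigma(F_1)$ with each piece mapping birationally to $\tilde B$, the normalization of $F_1$ is $B$, and $\mathrm{pr}_1|_{F_1}$ descends to a morphism $B\to C$ which is non-constant (otherwise $\tilde B$ would be a coordinate curve), contradicting the hypothesis. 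I then let $D$ be the normalization of $F$, take $g:D\to C$ to be $\mathrm{pr}_1$ composed with $D\to F$, and let $f:D\to B$ be the map induced by $F\to\tilde B$ (it factors through the normalization $B$ since $D$ is smooth). A fiber count over general $x\in C$ identifies $g^{-1}(x)$ with $\tilde B\cap C_x$, so $\deg g=C_x\cdot\tilde B=d$ and $\deg f=2$, yielding exactly the required diagram.

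The heart of the argument is showing this diagram does not reduce, which I would do by contradiction. A reduction supplies $\phi:D\to F'$ of degree $k>1$, a map $\gamma:F'\to C$ with $g=\gamma\circ\phi$, a map $\beta:B\to H$ of degree $k$, and $\psi:F'\to H$ of degree $2$, with $\beta\circ f=\psi\circ\phi$. Write $\tilde\sigma$ and $\tau$ for the deck involutions of $f$ and $\psi$. Since $f(p)=f(\tilde\sigma p)$, commutativity gives $\psi\phi(p)=\psi\phi(\tilde\sigma p)$, so for general $p$ either $\phi(\tilde\sigma p)=\tau\phi(p)$ or $\phi(\tilde\sigma p)=\phi(p)$, and by irreducibility of $D$ one alternative holds identically. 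In the first case the normalization map $\nu:B\to\tilde B$ factors as $B\xrightarrow{\beta}H\xrightarrow{\gamma^{(2)}|_H}C^{(2)}$, so $\deg\nu=\deg(\gamma^{(2)}|_H)\cdot k\ge k>1$, contradicting that $\nu$ is birational onto its image. In the second case $g=\gamma\circ\phi$ is $\tilde\sigma$-invariant, hence factors through $f$ and produces a non-constant morphism $B\to C$, again contradicting the hypothesis. Either way we reach a contradiction, so the diagram does not reduce.

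The step I expect to be the main obstacle is this final dichotomy: one must set up the reduction data carefully, keep track of the two deck involutions, and in the first alternative make the factorization of $\nu$ through $\beta$ genuinely precise (most transparently via the embedding $H\hookrightarrow F'^{(2)}$ defined by $\psi$ together with the induced map $\gamma^{(2)}$), since that is exactly where the degree-one normalization clashes with $k>1$. The irreducibility of $F$ is the other delicate spot, and it is precisely the place where the assumption of no non-trivial morphism $B\to C$ enters the proof structurally.
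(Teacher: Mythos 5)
Your proposal is correct and follows essentially the same route as the paper: take $F=\pi_C^{-1}(\tilde B)$, prove its irreducibility from the absence of non-trivial morphisms $B\rightarrow C$, normalize to obtain $D$ with $\deg g=d$ (your fiber count over general $x\in C$ is the paper's push--pull intersection computation in a different guise), and rule out a reduction by embedding $H$ in $F'^{(2)}$ and factoring the degree-one map $g^{(2)}|_B$ through the degree-$k$ map $\beta$. Your explicit dichotomy $\phi\tilde\sigma=\tau\phi$ versus $\phi\tilde\sigma=\phi$ even sharpens the paper's bare assertion that the image of $B$ under $h^{(2)}$ is $H$, with the second alternative correctly eliminated by the hypothesis that $B$ admits no non-constant map to $C$.
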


\begin{proof}
First of all, we observe that $\tilde{B}$ is not the diagonal in $C^{(2)}$ because we are assuming that there are no morphisms from $B$ to $C$. 

Let $\tilde{D}=\pi_C^*(\tilde{B})\in Div(C\times C)$ and $D$ its normalization. We notice that with our hypothesis $\tilde{D}$ is irreducible. Indeed, otherwise, one of its components would have as normalization the curve $B$, because we have a $(2:1)$ morphism from $\tilde{D}$ to $B$, and since $\tilde{D}\subset C\times C$ we would obtain a non trivial morphism from $B$ to $C$ contradicting our hypothesis.

Now, we are going to compute the degree of $\tilde{D}\rightarrow C$, given by the projection onto one factor:
\[
\begin{array}{c}
\tilde{D}\cdot (C\times P + P\times C)={\pi_C}_*({\pi_C}^*(\tilde{B})\cdot {\pi_C}^*(C_P))= \\
\tilde{B}\cdot {\pi_C}_*{\pi_C}^*(C_P)=2\tilde{B}\cdot C_P=2d.
\end{array}
\]
And therefore, since $\tilde{D}$ is symmetric with respect to the involution $(x,y) \rightarrow (y,x)$ by construction, $\tilde{D}\cdot (C\times P)=d$, and so, the degree of the morphism on $C$ is precisely $d$. In this way, we have a diagram \vspace{-10pt}
\[
\begin{array}{ccc}
{\xymatrix{
\tilde{D}\ar[r]^{(2:1)}\ar[d]_{(d:1)}&\tilde{B}\\
C&
}} & {\begin{array}{c} \\ \textrm{and taking their normalizations} \\ \textrm{we obtain a diagram of} \\ \textrm{morphisms of smooth curves} \end{array}}&
{\xymatrix{
D\ar[r]^{(2:1)}\ar[d]_{(d:1)}&B\\
C&. \vspace{-10pt}
}}
\end{array}
\] 

We call $f:D \rightarrow B$ the map coming from $\pi_C|_{\tilde{D}}$ and $g:D \rightarrow C$ the map coming from the projection onto one factor of $C\times C$.

Let $\alpha$ be the degree one morphism induced in $B$ by the immersion of $\tilde{B}$ in $C^{(2)}$. Since we have $D \stackrel{(2:1)}{\longrightarrow}B$, as we have seen in the Introduction there exists an immersion of $B$ in $D^{(2)}$ as pairs of points with the same image by this morphism. 

Since $D\stackrel{(1:1)}{\longrightarrow} \tilde{D} \subset C\times C$ we can consider that a general point in $D$ is a pair $(x,y)$ with $x,y\in C$. Moreover, since $D\rightarrow B$ is induced by $\pi_C|_{\tilde{D}}$, a general fiber of $D\rightarrow B$ will be two points $(x,y)$ and $(y,x)$. Hence, we can write a general point of $B\subset D^{(2)}$ as $(x,y)+(y,x)$. 

Now, we consider the restriction to $B\subset D^{(2)}$ of $g^{(2)}$. By construction this morphism is precisely $\alpha$ and therefore the image is the original $\tilde{B}$. In particular, $g^{(2)}|_B$ is generically of degree one.  

We are going to see that the diagram does not reduce by contradiction: Assume that there exist curves $F$ and $H$, and a diagram \vspace{-10pt}
\[
\xymatrix{
D \ar@/_{10pt}/[dd]_{g}\ar[r]^{f} \ar[d]^{h}& B \ar[d]^{r}\\
F \ar[d]_{l} \ar[r]^{s} & H \\
C & \vspace{-10pt}
}
\] 
as in Definition \ref{defred}. Then, as we have seen in the Introduction, the fibers of $s$ give a curve isomorphic to $H$ inside $F^{(2)}$. Hence, we have
\[
\xymatrix@M+2pt{
& & \tilde{B}\ar@{_{(}->}[d] \\
B \ar@{^{(}->}[r] \ar[d] \ar@/^12pt/[rru]^{\alpha}&	D^{(2)} \ar[r]^{g^{(2)}} \ar[d]^{h^{(2)}} & C^{(2)}   \\
H	\ar@{^{(}->}[r] \ar@/_80pt/[rruu] & F^{(2)}\ar[ur]_{l^{(2)}} & \\
&&
}
\]

By definition, the image of $B\subset D^{(2)}$ by $h^{(2)}$ is $H\subset F^{(2)}$, that is, the embedding of $H$ in $F^{(2)}$ given by $s$, and we know that $l \circ h=g$ so $l^{(2)}\circ h^{(2)}=g^{(2)}$, hence 
\[
{g^{(2)}}|_B:
\xymatrix@C+15pt{
 B \ar[r]^{h^{(2)}=r} \ar@/^20pt/[rr]^{(1:1)} & H \ar[r]^{l^{(2)}}& \tilde{B} 
}
\]
thus $r$, as well as $h$, have degree one. Consequently, our diagram does not reduce (see Definition \ref{defred}). 
\end{proof}

We observe that we could change the hypothesis of the non existence of morphisms from $B$ to $C$ by assuming that $\tilde{B}$ is not the diagonal and that $\pi^{-1}_C(\tilde{B})$ is irreducible.

Putting these two theorems together we find the characterization of curves in the symmetric square $C^{(2)}$ previously stated in Theorem \ref{charcurves}.


\bibliographystyle{alpha}
\bibliography{bib}
\end{document}